\documentclass{birkjour}
\usepackage{fourier}
\usepackage{ulem}
\usepackage{amsthm}
\usepackage{mathrsfs}
\usepackage{eufrak}
\usepackage{graphicx}
\usepackage{stmaryrd}
\usepackage{float}
\usepackage{amsmath}
\usepackage[all]{xy}
\usepackage{xypic}
\usepackage{hyperref}
\usepackage{tikz}
\usepackage{breqn}
\usepackage{lineno}

\usepackage{multicol}
\usepackage{multirow}
\usepackage{array}
\usepackage{booktabs}
\usepackage{rotating}

\hypersetup{
  pdftitle   = {},
  pdfauthor  = {},
  pdfcreator = {\LaTeX\ with package \flqq hyperref\frqq}
}

\DeclareMathAlphabet{\mathpzc}{OT1}{pzc}{m}{it}

\newtheorem{theorem}{Theorem}[section]
\newtheorem*{theorem*}{Theorem}
\newtheorem{proposition}[theorem]{Proposition}

\newtheorem*{lemma*}{Lemma}
\newtheorem{corollary}[theorem]{Corollary}
\newtheorem{conjecture}[theorem]{Conjecture}
\newtheorem*{conjecture*}{Conjecture}

\theoremstyle{definition}
\newtheorem{definition}[theorem]{Definition}
\newtheorem{example}[theorem]{Example}

\theoremstyle{remark}

\DeclareMathOperator{\im}{im}

\newcommand{\X}{X+\xi}

\newcommand{\J}{\mathcal{J}}
\newcommand{\TM}{TM \oplus T^*M}

\numberwithin{equation}{section}

\begin{document}

\title[Mirror symmetry of elliptic curves and generalized complex geometry]{A remark about mirror symmetry of elliptic curves and generalized complex geometry}

\author{Leonardo Soriani Alves and Lino Grama}
\address{Department of Mathematics - IMECC, University of Campinas - Brazil}
\email{leo.soriani@gmail.com, linograma@gmail.com}
\thanks{LSA is supported by Fapesp grant no. 2013/04034-7. LG is supported by Fapesp grant no. 2014/17337-0 and CNPq grant 476024/2012-9.}
\begin{abstract}
In this short note we describe the isomorphism of generalized complex structure between $T$-dual manifolds introduced by Cavalcanti-Gualtieri, in the case of elliptic curves. We also compare this isomorphism with the mirror map for elliptic curves described by Polishchuk and Zaslow. 
\end{abstract}

\maketitle

\section{Introduction}
Mirror symmetry predicts that in the mirror manifolds $M$ and $M^{\vee}$, the symplectic geometry of $M$ can be related to the complex geometry of $M^{\vee}$ and vice-versa. This correspondence between mirror manifolds has a geometric description by Strominger, Yau and Zaslow via $T$-duality \cite{SYZ}. 

Kontsevich provides another interpretation for mirror symmetry, also known as homological mirror symmetry(HMS): two Calabi-Yau manifolds $M$ and $M^{\vee}$ are mirror if the derived category of coherent sheaves on $M$ is equivalent to the derived Fukaya category of $M^{\vee}$.

In the case of elliptic curves, the HMS conjecture was proved by Polishchuk and Zaslow \cite{PZ}.

Cavalcanti and Gualtieri in \cite{CG} describe how to transport generalized complex structures between $T$-dual manifolds (in the sense of Bouwknegt, Hannabuss and Mathai) using an isomorphism of sections of the vector bundle where are defined the generalized complex structures.


In this work we explicitly describe the Cavalcanti-Gualtieri isomorphism in the case of elliptic curves, and we remark that in the case of trivial $B$-fields this isomorphism coincides with the mirror map described by Polishchuk-Zaslow as well SYZ-mirror symmetry. 

In the first three sections we briefly review generalized complex geometry, $T$-duality in the sense of BHM and mirror symmetry for elliptic curves. In the last section we construct the Cavalvanti-Gualtieri map for elliptic curves and we compare this with the other versions of mirror symmetry.



\section{Generalized complex geometry}

We will recall the basics of generalized complex geometry. A detailed description of this theory can be found in \cite{Gua}. Let $M$ be an $n$-dimensional smooth manifold and $H\in \varOmega^3(M)$ a closed $3$-form. The sum of the tangent and cotangent bundles $\TM$ has a natural symmetric bilinear form of signature $(n,n)$ defined by $$\langle X+\xi,Y+\eta \rangle=\frac{1}{2}(\eta(X)+\xi(Y))$$ and a bracket of sections called the Courant bracket: $$[\X,Y+\eta]_H=[X,Y]+\mathcal{L}_X\eta-i_Yd\xi+i_Xi_YH.$$

If $B\in \varOmega^2(M)$, we can see it as a map $TM \to T^*M$ given by $X \mapsto B(X)=i_XB$. Then we define the orthogonal automorphism $exp(B): \TM \to \TM$ $$exp(B):=\left(\begin{array}{cc}
1 & 0\\
B & 1\end{array}\right).$$

\begin{definition}
A generalized complex structure on $(M^{2n},H)$ is an orthogonal automorphism $\J:\TM \to \TM$ satisfying $\J^2=-1$ whose $i$-eigenbundle is involutive under the Courant bracket.
\end{definition}

The remarkable feature of the generalized complex structures is that they encompass complex and symplectic structures as special cases. If $J$ and $\omega$ are, respectively, complex and symplectic structures on $M$, then $$\J_J:=\left(\begin{array}{cc}
 -J & 0\\
  0 & J^* \end{array}\right)\ \mbox{and}\  \J_{\omega}:= \left(\begin{array}{cc}
0 & -\omega^{-1}\\
\omega & 0\end{array}\right)$$ are generalized complex structures on $M$.

Given a generalized complex structure $\J$, we can use a $2$-form $B\in \varOmega^2(M)$ to obtain $exp(-B)\circ \J \circ exp(B)$, which is again a generalized complex structure thanks to the orthogonality of $exp(B)$.

\begin{definition}\label{Bfield}
If $\J$ is a generalized complex structure on $M$ and $B\in \varOmega^2(M)$, the generalized complex structure $exp(-B)\circ \J \circ exp(B)$ is called $B$-field transform of $\J$. If $\J$ comes from a symplectic (complex) structure, $exp(-B)\circ \J \circ exp(B)$ is called $B$-symplectic ($B$-complex) structure.
\end{definition}

\section{T-duality in the sense of Bouwknegt, Hannabuss and Mathai}\label{s1}

We start with the definition of T-duality for principal torus bundles provided with a closed $3$-form, following \cite{BHM}.
\begin{definition}
Let $M$ and  $\tilde{M}$ be principal $T^k$-bundles over the same base $B$ and let $H \in \varOmega^3(M)$, $\tilde{H} \in \varOmega^3(\tilde{M})$ be invariant closed $3$-forms. Let $M \times_B \tilde{M}$ be the fiber product and consider the diagram 


$$\xymatrix{
    &(M\times_B \tilde{M},p^*H-\tilde{p}^*\tilde{H}) \ar[ld]^{}  \ar[rd]_{}& \\
(M,H)\ar[dr]&  & (\tilde{M},\tilde{H})\ar[ld]\\
               & B  & }$$

We say that $(M,H)$ and $(\tilde{M},\tilde{H})$ are T-dual if $p^*H-\tilde{p}^*\tilde{H}=dF$, where $F \in \varOmega^2(M \times_B \tilde{M})$ is a $T^{2k}$-invariant non-degenerate $2$-form on the fibers . The product $M\times_B \tilde{M}$ is called the correspondence space.
\end{definition}





Given T-dual spaces $M$ and $\tilde{M}$, Cavalcanti and Gualtieri \cite{CG} constructed an isomorphism $\varphi$ between the spaces of invariant sections of $TM\oplus T^*M$ and $T\tilde{M}\oplus T^*\tilde{M}$ preserving the bilinear form and the twisted Courant bracket. Thus one can transport $T^k$-invariant generalized complex structures between $M$ and $\tilde{M}$.

Essentially, $\varphi$ is the composition of the pull back to the correspondence space, $B$-transform defined by $-F$ on the correspondence space and push-forward to $\tilde{M}$:
\begin{equation}\label{fi}
\varphi(\X)= \tilde{p}_*(\hat{X})+p^*\xi-F(\hat{X}),
\end{equation}
where $\hat{X}$ is the only lift of $X$ to $M \times \tilde{M}$ such that $p^*\xi-F(\hat{X})$ is a basic form. The existence and uniqueness of such a lift is guaranteed by the non-degeneracy of $F$.



\begin{theorem}\label{fiso}
The map $\varphi$ is an isomorphism; that is, for all $u,v \in (\TM)/T^k$
$$\langle \varphi(u),\varphi(v) \rangle=\langle u,v \rangle\ \ \mbox{ and }\ \ [\varphi(u),\varphi(v)]_{\tilde{H}}= \varphi([u,v]_H).$$
\end{theorem}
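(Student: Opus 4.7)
I would view formula~(\ref{fi}) as a three-step composition: pullback of invariant sections from $M$ to the correspondence space $M\times_B\tilde M$, $B$-transform by $-F$ on the correspondence, and pushforward of invariant basic sections down to $\tilde M$. Each of these steps has a controlled effect on the pairing and on the twisted Courant bracket, and the hypothesis $p^*H-\tilde p^*\tilde H=dF$ is exactly what ensures that the middle $B$-transform carries the twist $p^*H$ to $\tilde p^*\tilde H$, so the three steps splice together into a map of the right type.

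\textbf{Well-definedness, bijectivity, and the bilinear form.} Given a $T^k$-invariant $X+\xi$ on $M$, any two lifts of $X$ to $M\times_B\tilde M$ differ by a vector in $\ker dp$. The basicness requirement that $p^*\xi-F(\hat X)$ annihilate $\ker d\tilde p$ is then a linear equation in this vertical freedom and has a unique solution by the non-degeneracy of $F\vert_{\ker dp\otimes\ker d\tilde p}$. The symmetric construction, starting on $\tilde M$, provides a two-sided inverse, so $\varphi$ is a bijection. For the pairing, write $\tilde p^*\tilde\xi=p^*\xi-F(\hat X)$ and $\tilde p^*\tilde\eta=p^*\eta-F(\hat Y)$ and evaluate:
\[
\tilde\eta(\tilde p_*\hat X)+\tilde\xi(\tilde p_*\hat Y)=\eta(X)+\xi(Y)-F(\hat Y,\hat X)-F(\hat X,\hat Y),
\]
and the last two terms cancel by antisymmetry of $F$, yielding $\langle\varphi(u),\varphi(v)\rangle=\langle u,v\rangle$.

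\textbf{Courant bracket (the main obstacle).} I would reduce this to three standard facts: (a) pullback of forms and lifting of invariant vector fields intertwine $[\cdot,\cdot]_H$ with $[\cdot,\cdot]_{p^*H}$, up to vertical terms that are absorbed by $T^k$-invariance; (b) the $B$-transform $e^{-F}$ is a Courant algebroid isomorphism carrying the bracket twisted by $p^*H$ to the one twisted by $p^*H-dF=\tilde p^*\tilde H$, as one sees by the identity $[e^Bu,e^Bv]_{H+dB}=e^B[u,v]_H$ applied with $B=-F$; (c) pushforward of $\tilde p$-basic sections along the principal $T^k$-bundle $\tilde p$ is a homomorphism of twisted Courant brackets on invariant sections. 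The delicate point is that $\hat X$ depends on $\xi$ through the basicness condition, so one must verify that, after taking the Courant bracket upstairs, the resulting $1$-form stays $\tilde p$-basic and matches the $1$-form part of $\varphi([u,v]_H)$. I expect this to be the bulk of the work: one expands the three pieces of the Courant bracket (Lie derivative, exterior derivative, $H$-contraction), collects the correction terms produced by the non-canonical lift, and uses the relation $dF=p^*H-\tilde p^*\tilde H$ to absorb them into the $\tilde H$-twist on $\tilde M$. A convenient technical setup is to work in a local trivialization of $M\times_B\tilde M$ over $B$, where $\hat X$, $F$, and the two $3$-forms split cleanly into base, fiber, and connection pieces, making the final term-by-term verification mechanical.
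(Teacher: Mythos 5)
The paper states Theorem \ref{fiso} without proof---it is quoted directly from Cavalcanti--Gualtieri \cite{CG}---so there is no in-paper argument to compare against. Your outline follows the same three-step reading of $\varphi$ (pullback, $B$-transform by $-F$, pushforward of basic sections) that the paper uses to define the map, and it reproduces the standard argument of the cited reference: the pairing computation via antisymmetry of $F$ is correct, the uniqueness of the lift via non-degeneracy of $F$ on $\ker dp\otimes\ker d\tilde p$ is the right mechanism, and the bracket claim correctly hinges on $dF=p^*H-\tilde p^*\tilde H$ together with the identity for $B$-transforms of twisted Courant brackets. The proposal is sound as a proof sketch at the level of detail one could reasonably expect here.
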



Now we use $\varphi$ to transport invariant generalized complex structures:

\begin{corollary}\label{coro}
Let $(M,H)$ and $(\tilde{M},\tilde{H})$ be T-dual spaces. If $\J$ is a invariant generalized complex structure on $M$, then $$\tilde{\J}:=\varphi^{-1}\circ \J \circ \varphi$$ is an invariant generalized complex structure on $\tilde{M}$.
\end{corollary}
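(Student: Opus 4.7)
The plan is to derive each defining property of a generalized complex structure on $\tilde{M}$ for $\tilde{\J} = \varphi^{-1}\circ \J \circ \varphi$ by transporting the corresponding property of $\J$ through $\varphi$, using Theorem \ref{fiso}. Since $\varphi$ is defined on the quotients $(\TM)/T^k$ and $(T\tilde{M}\oplus T^*\tilde{M})/T^k$, the operator $\tilde{\J}$ is initially obtained as an endomorphism of invariant sections; since invariant sections of $T\tilde{M}\oplus T^*\tilde{M}$ determine the full bundle map (the fiber over any point can be reached from an invariant section by the torus action), this will be enough to define $\tilde{\J}$ as a bundle automorphism, and invariance of $\tilde{\J}$ is automatic by construction.

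First I would check the two purely algebraic properties. For $\tilde{\J}^2=-1$, a direct computation $\varphi^{-1}\J\varphi\circ \varphi^{-1}\J\varphi = \varphi^{-1}\J^2\varphi = -\varphi^{-1}\varphi = -\mathrm{id}$ suffices. For orthogonality, I would use the first clause of Theorem \ref{fiso}: for any invariant $u,v$,
\begin{equation*}
\langle \tilde{\J} u, \tilde{\J} v\rangle = \langle \varphi^{-1}\J\varphi u,\varphi^{-1}\J\varphi v\rangle = \langle \J\varphi u,\J\varphi v\rangle = \langle \varphi u,\varphi v\rangle = \langle u,v\rangle,
\end{equation*}
where the second and fourth equalities use that $\varphi$ preserves the pairing and the third uses orthogonality of $\J$.

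Next I would handle the integrability condition. Let $L\subset (\TM)\otimes \mathbb{C}$ be the $+i$-eigenbundle of $\J$; then by construction the $+i$-eigenbundle of $\tilde{\J}$ is $\tilde{L} = \varphi^{-1}(L)$. Involutivity of $L$ under $[\,\cdot\,,\,\cdot\,]_H$ together with the second clause of Theorem \ref{fiso} gives, for $u,v$ invariant sections of $\tilde{L}$,
\begin{equation*}
[u,v]_{\tilde{H}} = \varphi^{-1}\bigl([\varphi u,\varphi v]_H\bigr) \in \varphi^{-1}(L) = \tilde{L},
\end{equation*}
since $\varphi u, \varphi v \in L$. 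Hence $\tilde{L}$ is closed under $[\,\cdot\,,\,\cdot\,]_{\tilde{H}}$ at the level of invariant sections, and by $T^k$-equivariance this extends to arbitrary sections.

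There is no real obstacle here: the corollary is a formal consequence of Theorem \ref{fiso}, once one observes that the three defining conditions for a generalized complex structure — squaring to $-1$, orthogonality with respect to the natural pairing, and involutivity of the $i$-eigenbundle under the twisted Courant bracket — are each preserved by conjugation with an isomorphism of Courant algebroids. The only mildly subtle point to verify is that working with invariant sections is sufficient to characterize the bundle automorphism $\tilde{\J}$ and the involutivity of $\tilde{L}$, which follows from $T^k$-equivariance of the whole setup.
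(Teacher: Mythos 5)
Your argument is correct and is essentially the reasoning the paper intends: the corollary is stated there without proof, as an immediate formal consequence of Theorem \ref{fiso}, and you have simply written out the three routine verifications (squaring to $-1$, orthogonality, involutivity of the $i$-eigenbundle). One small imprecision worth fixing: the passage from involutivity on \emph{invariant} sections of $\tilde{L}$ to involutivity on \emph{all} sections does not follow from $T^k$-equivariance alone, since general sections are $C^\infty(\tilde{M})$-combinations of invariant ones rather than translates of them; the correct justification is the Leibniz-type identity for the twisted Courant bracket together with the isotropy of $\tilde{L}$ (which is automatic for the $i$-eigenbundle of an orthogonal $\tilde{\J}$ and kills the anomalous $\langle u,v\rangle\,df$ term when a section is multiplied by a function).
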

%

\section{Mirror of elliptic curves}

In this section we briefly review the well-known results about mirror symmetry for elliptic curves: homological mirror symmetry and T-duality. Classical references are \cite{Aur}, \cite{Kon} and \cite{PZ}. We start by recalling the complex and symplectic structure on elliptic curves.



\subsection{Complex and symplectic structures on elliptic curves}

\begin{definition}
An elliptic curve is a compact Riemann surface of genus one. 
\end{definition}

Topologically, every elliptic curve is homeomorphic to a  2-dimensional torus. 

\begin{definition}
Let $V$ be a vector space and $\varLambda$  a subgroup of $V$. We say that  $\varLambda$ is a \textit{lattice} if it is discrete and the quotient $V/\varLambda$ is compact.
\end{definition}

\begin{example}
Let $V$ be a vector space with base  $\{v_1,\dots,v_n\}$. Then $\varLambda=v_1\mathbb{Z}\oplus \dots \oplus v_n\mathbb{Z}$ is a lattice on $V$.
\end{example}

Every elliptic curve can be described by the quotient $\mathbb{C}/\varLambda$ where $\varLambda$ is a lattice on $\mathbb{C}$. Since an elliptic curve is a quotient of $\mathbb{C}$, it inherits a complex structure and a group structure.




\begin{proposition}\label{moduli-curv}
Two complex tori  $(\mathbb{C}/\varLambda_1)$ and $(\mathbb{C}/\varLambda_2)$  are isomorphic if, and only if, there exist $c \in \mathbb{C}^*$ such that  $\varLambda_1=c\varLambda_2$.
\end{proposition}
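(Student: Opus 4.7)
\emph{Plan.} The strategy is to lift any isomorphism of tori to the universal cover $\mathbb{C}$ and read off the relation between the two lattices.

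For the easy (``if'') direction, if $\varLambda_1 = c\varLambda_2$ with $c\in\mathbb{C}^*$, then multiplication by $c$ is a $\mathbb{C}$-linear automorphism of $\mathbb{C}$ that carries $\varLambda_2$ bijectively onto $\varLambda_1$, hence descends to a biholomorphism (in fact a Lie group isomorphism) $\mathbb{C}/\varLambda_2\to\mathbb{C}/\varLambda_1$.

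For the converse, let $f:\mathbb{C}/\varLambda_1\to\mathbb{C}/\varLambda_2$ be an isomorphism of complex tori. After composing with a translation in the target, I may assume $f([0])=[0]$. Because $\mathbb{C}$ is simply connected, standard covering space theory produces a unique holomorphic lift $\tilde{f}:\mathbb{C}\to\mathbb{C}$ with $\tilde{f}(0)=0$; applied in the same way to $f^{-1}$, this shows $\tilde{f}$ is a biholomorphism of $\mathbb{C}$. The classical description $\Aut(\mathbb{C})=\{z\mapsto az+b : a\in\mathbb{C}^*\}$ (an injective entire map cannot have an essential singularity at infinity, so it must be a degree-one polynomial), combined with $\tilde{f}(0)=0$, forces $\tilde{f}(z)=az$ for some $a\in\mathbb{C}^*$.

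Finally, equivariance of $\tilde{f}$ with respect to the deck transformation groups gives $\tilde{f}(z+\lambda)-\tilde{f}(z)\in\varLambda_2$ for every $\lambda\in\varLambda_1$, i.e.\ $a\lambda\in\varLambda_2$, so $a\varLambda_1\subset\varLambda_2$. The same reasoning applied to $\tilde{f}^{-1}(w)=a^{-1}w$ yields the reverse inclusion, so $a\varLambda_1=\varLambda_2$; taking $c=a^{-1}$ then gives $\varLambda_1=c\varLambda_2$. The only step that requires a genuine ingredient beyond bookkeeping is the lift-and-classify argument, but lifting is immediate from simple connectedness of $\mathbb{C}$ and the classification of $\Aut(\mathbb{C})$ is standard complex analysis, so no step is really an obstacle.
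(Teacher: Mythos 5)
Your argument is correct and complete: the lift-to-the-universal-cover strategy, the classification of $\Aut(\mathbb{C})$ via the behaviour at infinity, and the deck-transformation equivariance giving $a\varLambda_1=\varLambda_2$ are all handled properly, including the necessary normalization $f([0])=[0]$ by a translation. The paper itself states this proposition without proof (it is a classical fact about the moduli of complex tori, deferred to references such as \cite{Hain}), so there is nothing to compare against; your write-up is the standard textbook proof and would serve as a valid justification.
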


Given a complex number $\tau$ with $\im \tau >0$, we construct the elliptic curve $E=\mathbb{C}/(\mathbb{Z}\oplus \tau\mathbb{Z})$, and the complex structure on $E$ is determined by $\tau$. Of course different complex numbers give rise to bi-holomorphic elliptic curves. For more details about the moduli space of elliptic curves, see for instance \cite{Hain}.


In the mirror symmetry setting we consider complexified symplectic structures $\omega_{\mathbb{C}}=B+i\omega$ on the elliptic curve $E$, where $\omega$ is a symplectic form and $B$ is a closed 2-form on $E$ called {\it B-field}.  As $H^2(E,\mathbb{R})=\mathbb{R}$ a complexified symplectic form is again determined by a complex number $\rho=b+ia$ with $a>0$ (since $a$ corresponds to the area of $E$) where $a$ determines a symplectic struture on $E$ and $b$ determines a $2$-form. In the language of generalized complex structures it corresponds to a $B$-symplectic structure (see Definition \ref{Bfield})with $$\omega=\left(\begin{array}{cc}
0 & a\\
-a & 0\end{array}\right)\ \ \mbox{and}\ \ B=\left(\begin{array}{cc}
0 & b\\
-b & 0\end{array}\right).$$



\subsection{SYZ mirror symmetry}
Mirror symmetry is a phenomenon predicted  by string theory, where questions about symplectic geometry of a manifold 
$M$ can be translated into the complex geometry of a manifold $M^{\vee}$, and vice-versa.  The manifold $M^{\vee}$ is called the mirror of $M$.

Strominger-Yau-Zaslow \cite{SYZ} suggest a geometric interpretation to mirror symmetry: the SYZ mirror map in the case where there are no singularities is, by definition, $T$-duality. 

We will briefly review the SYZ-construction.


\begin{definition}
A Kähler manifold $(M,\omega,J)$ of complex dimension $n$ is called Calabi-Yau if its canonical bundle is holomorphically trivial; that is, there is a globally defined holomorphic volume form $\varOmega \in \varOmega^{n,0}(M)$.
\end{definition}

One can write the restriction of $\varOmega$ to a Lagrangian submanifold $X\subset M$ as $$\varOmega|_L=\psi vol_g,$$ where $\psi\in C^{\infty}(L,\mathbb{C}^*)$ and $vol_g$ is the volume form  induced by the Kähler metric $g:=\omega(\cdot,J\cdot)$.

\begin{definition}
Let $(M,\omega,J)$ be a Kähler manifold with complex dimension $n$. A Lagrangian submanifold $X \subset M$  is called special Lagrangian if the argument of $\psi$ is constant.
\end{definition}

In \cite{SYZ} Strominger-Yau-Zaslow suggest that mirror Calabi-Yau manifolds can be fibred over the same base in such way that the fibers are special Lagrangian tori. The idea is the following: if $M$ is Calabi-Yau with a special Lagrangian torus fibration $f:M\to B$, the mirror $M^{\vee}$ is the \textit{moduli space} of pairs $(L,\nabla)$ where $L$ is a special Lagrangian torus on $M$ and $\nabla$ is a \textit{flat} connection on $L$. Since the dual torus can be realized as \textit{the moduli space} of \textit{flat} connections (see \cite{Mcl}) mirror manifolds are fibred over the same base $B$ and the fiber over each point is the dual torus.



In the case of elliptic curves, the Lagrangian torus fibration has no singularities and the mirror map can be described as follows:

\begin{example}\label{elip}
Consider the elliptic curve $M=\mathbb{C}/(\mathbb{Z}\oplus \tau\mathbb{Z})$, where for simplicity we assume $\tau=i\gamma\ \mbox{ with }\  \gamma \in \mathbb{R}_+$, holomorphic volume form $\varOmega=dz$ and Kähler form $\omega$ such that $$\displaystyle \int_M\omega=\lambda.$$

Note that the complex and symplectic structures on $M$ are determined by $\gamma$ e $\lambda$ respectively. The mirror elliptic curve has the complex structure determined by $\lambda$ and symplectic structure determined by $\gamma$; that is,   $M^{\vee}=\mathbb{C}/(\mathbb{Z}\oplus i\lambda\mathbb{Z})$ with Kähler form $\omega^{\vee}$ such that $$\displaystyle \int_{M^{\vee}} \omega^{\vee}=\gamma.$$ For more details, see for instance \cite{Aur}.
\end{example}

\subsection{Homological mirror symmetry}

Kontsevich in \cite{Kon} suggests a new approach to mirror symmetry based on an equivalence of categories. Such categories are built in terms of the complex and symplectic geometry of the Calabi-Yau manifold $M$ and its (Calabi-Yau) mirror $M^{\vee}$. This version of mirror symmetry is called homological mirror symmetry (HMS). 

Homological mirror symmetry predicts that the derived category of coherent sheaves on the Calabi-Yau manifold $M$ is equivalent to the Fukaya category on the mirror Calabi-Yau manifold $M^{\vee}$. More precisely:

\begin{conjecture}[Kontsevich]\label{co-hms}
If $(M,J,\omega)$ and $(M^{\vee},J^{\vee},\omega^{\vee})$ are mirror manifolds, then 
$$ D^b Fuk(M,\omega) \cong D^b Coh (M^{\vee},J^{\vee})$$
$$ D^b Coh(M,J) \cong D^b Fuk (M^{\vee},\omega^{\vee})$$ are equivalences of categories.
\end{conjecture}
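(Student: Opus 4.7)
The plan is to leverage the SYZ picture to construct the mirror equivalence via a family version of the Fourier--Mukai transform along dual torus fibrations, rather than attempting a direct categorical argument. First I would assume the SYZ hypothesis: that $M$ and $M^\vee$ admit dual special Lagrangian torus fibrations $\pi : M \to B$ and $\pi^\vee : M^\vee \to B$ over the same base $B$, possibly with singular locus $B_{\mathrm{sing}} \subset B$. Over the smooth part $B \setminus B_{\mathrm{sing}}$, the duality exchanges a fiber $L_b = \pi^{-1}(b)$ equipped with a flat $U(1)$-connection $\nabla$ on $M$ with the point of $M^\vee$ representing $(L_b,\nabla)$ under the moduli interpretation recalled in Example \ref{elip}. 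Skyscraper sheaves on $M^\vee$ are then expected to correspond under the mirror functor to SYZ fibers of $M$ decorated with flat connections; this provides the required correspondence on a set of generators.

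Next I would upgrade this correspondence to a functor $\Phi : D^b \mathrm{Fuk}(M,\omega) \to D^b \mathrm{Coh}(M^\vee, J^\vee)$ by a family Floer construction in the spirit of Fukaya and Abouzaid. Given a Lagrangian brane $(L,\nabla_L) \subset M$, one forms for each smooth fiber the Floer cohomology $HF^*\bigl((L,\nabla_L),(\pi^{-1}(b),\nabla_b)\bigr)$; as $(b,\nabla_b)$ varies in $M^\vee$, these fiberwise groups assemble into a coherent sheaf $\Phi(L,\nabla_L)$. The $A_\infty$-structure on the Fukaya side, encoded by signed counts of pseudo-holomorphic polygons with Lagrangian boundary, would then need to match the derived Hom on the sheaf side so that Floer composition translates into Yoneda composition of $\Ext$ classes. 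The reverse functor $\Phi^\vee$ is defined by swapping the roles of $M$ and $M^\vee$; both conjectured equivalences in the statement are then instances of this construction.

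Finally I would check that $\Phi$ is an equivalence of triangulated categories by verifying three properties: (i) full faithfulness on a split-generating family, for which the SYZ fibers with connections are the natural candidates, since Floer cohomology between them should recover $\Ext$ groups of skyscrapers on $M^\vee$; (ii) compatibility with the triangulated structure, meaning that Lagrangian surgery and short exact sequences of sheaves correspond; and (iii) essential surjectivity, by showing that the image of $\Phi$ split-generates $D^b \mathrm{Coh}(M^\vee,J^\vee)$. Composing $\Phi$ with $\Phi^\vee$ and checking that one recovers the identity on generators then closes the loop.

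The main obstacle, and precisely the reason this remains a conjecture in full generality, is the analytic behaviour near $B_{\mathrm{sing}}$. Singular SYZ fibers introduce wall-crossing corrections from holomorphic disk bubbling; Lagrangians meeting the singular locus are not automatically unobstructed in the Fukaya--Oh--Ohta--Ono sense; and the Novikov-parameter series that must be summed to pass from formal to honest coherent sheaves need not converge. These instanton corrections are controlled only in restricted settings --- elliptic curves \cite{PZ}, abelian varieties, the quartic K3 by Seidel, toric Fano manifolds, and a handful of other cases --- so a general proof would require substantial new input about the geometry of the SYZ base. Absent such input, the plan above produces a precise proof only in those special classes; establishing the conjecture as stated is beyond current technology.
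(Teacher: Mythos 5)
This statement is labelled as a conjecture in the paper, and the paper offers no proof of it: the text immediately following it says explicitly that the homological mirror conjecture remains open for an arbitrary Calabi--Yau manifold and that only the elliptic-curve case has been settled, by Polishchuk and Zaslow \cite{PZ}. So there is no proof in the paper to compare your attempt against, and no complete proof exists in the literature at this level of generality.

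Your proposal is an accurate sketch of the standard strategy one would use --- the SYZ dual-fibration picture, a family Floer functor sending a Lagrangian brane to the sheaf assembled from fiberwise Floer cohomologies, and a check of equivalence on split-generators --- and, to your credit, you correctly diagnose exactly where it breaks down: the singular fibers, unobstructedness in the Fukaya--Oh--Ohta--Ono sense, wall-crossing/instanton corrections, and convergence of Novikov series. But that diagnosis means your text is a research program, not a proof; every one of steps (i)--(iii) is asserted with ``should'' or ``would need to match'' rather than established. Since the paper only ever uses this conjecture in the form of Theorem \ref{tpz} (the Polishchuk--Zaslow theorem for elliptic curves), the honest conclusion is that nothing needs proving here beyond citing \cite{PZ} for that special case, and your general argument cannot be completed with currently available techniques --- as you yourself state in your final paragraph.
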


In Conjecture \ref{co-hms}, $Fuk(M,\omega)$ is the Fukaya category of $(M,\omega)$ whose objects are Lagrangian submanifolds of $(M,\omega)$ equipped with a flat bundle, whose morphims are given by Lagrangian intersection theory, and whose compositions are given by Floer homology.

The homological mirror conjecture remains open for an arbitrary Calabi-Yau manifold. For elliptic curves it was proved by Polishchuk and Zaslow in \cite{PZ}. Denote by $E^{\gamma}_{\lambda}$ the elliptic curve with complex structure given by $\mathbb{C}/\mathbb{Z}\oplus i\gamma\mathbb{Z}$ and symplectic structure given by the $2$-form $\omega$ such that $\int_E\omega=\lambda$. 


\begin{theorem}[Polishchuk-Zaslow]\label{tpz}
Homological mirror symmetry holds for elliptic curves. The mirror of $E^{\gamma}_{\lambda}$ is $E^{\lambda}_{\gamma}$; that is, $$D^bCoh(E^{\gamma}_{\lambda})=D^bFuk(E_{\gamma}^{\lambda}).$$
\end{theorem}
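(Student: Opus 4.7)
The plan is to exhibit an explicit equivalence of triangulated categories by matching a generating set on each side and then verifying that morphism spaces and compositions correspond.

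First, I would fix generators. On the coherent side, by a Beilinson-type argument and Atiyah's classification of indecomposable bundles on an elliptic curve, $D^b Coh(E^{\gamma}_{\lambda})$ is generated by the two line bundles $\mathcal{O}$ and $\mathcal{O}(p_0)$ for any chosen point $p_0$; more generally, every indecomposable bundle is labelled by its rank $q$, degree $p$, and a determinant parameter. On the symplectic side, realize $E^{\lambda}_{\gamma}$ as $\mathbb{R}^2/\mathbb{Z}^2$ with the appropriate area form; the natural objects of $Fuk(E^{\lambda}_{\gamma})$ are the closed geodesics, i.e.\ straight lines of rational slope $p/q$, equipped with a $U(1)$ flat connection. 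These are parametrized by a slope, a translation of the base point, and a holonomy.

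Next, I would define the candidate functor $\Phi \colon D^b Fuk(E^{\lambda}_{\gamma}) \to D^b Coh(E^{\gamma}_{\lambda})$ sending a geodesic of slope $p/q$ (with given translation and holonomy) to the unique semi-stable indecomposable bundle of rank $q$ and degree $p$ whose determinant is dictated by the Lagrangian data. The swap $\gamma \leftrightarrow \lambda$ is forced by the SYZ picture recalled in Example \ref{elip}: the symplectic area on one side becomes the imaginary part of the complex modulus on the other. For morphisms, two transverse geodesics of slopes $p_i/q_i$ meet in $|p_1 q_2 - p_2 q_1|$ points, matching the Euler characteristic $\chi(\mathcal{F}_1,\mathcal{F}_2)$ on the coherent side; a Maslov-index check shows that either all intersections contribute to $\Hom$ or all to $\Ext^1$, consistent with the vanishing of higher Ext on a curve.

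The main obstacle is the third step: matching compositions. On the Fukaya side, the Floer product between two morphisms counts pseudo-holomorphic triangles with Lagrangian boundary, weighted by $\exp(2\pi i \cdot \text{complexified area})$ times the holonomies along the boundary; these generating series are precisely Jacobi theta functions in the complex parameter $\lambda$. On the algebraic side, the Yoneda product on $\Ext^{*}$ admits an explicit description via Dolbeault representatives for line-bundle cohomology (or via Fourier-Mukai), and it too is expressed through theta functions in $\gamma$. Equating the two expressions reduces, after the SYZ swap, to the classical Riemann theta addition formula, which is the heart of the proof. Once composition is matched on the chosen generators, the remaining indecomposables are produced by the same Fourier-Mukai / Atiyah construction on both sides, and full faithfulness plus essential surjectivity then follow by standard Bondal-Kapranov tilting.
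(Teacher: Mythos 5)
The paper offers no proof of this statement: Theorem \ref{tpz} is quoted from \cite{PZ} and used purely as a black box, so there is no in-paper argument to compare yours against. What you have written is instead a reconstruction of the actual Polishchuk--Zaslow proof, and as an outline it is broadly faithful to it: geodesics of rational slope with flat connections versus Atiyah's indecomposables, the intersection count $|p_1q_2-p_2q_1|$ matching the Euler characteristic together with the Maslov-index argument placing everything in a single degree, and the identification of Floer triangle counts with products of theta functions via the addition formula is precisely the skeleton of their argument.

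Taken as a proof rather than a summary, however, your sketch has two genuine gaps. First, lines of rational slope equipped with a $U(1)$ flat connection only account for the indecomposable sheaves whose rank and degree are coprime (plus, via vertical lines, the torsion sheaves); Atiyah's indecomposables with $\gcd(\rk,\deg)=d>1$ are iterated self-extensions, and on the Fukaya side they require rank-$d$ local systems with unipotent, non-unitarily-diagonalizable monodromy. Your closing appeal to ``the same Fourier--Mukai construction on both sides'' is exactly where this must be carried out, and it is not automatic. Second, the Fukaya category is an $A_\infty$-category: matching $m_2$ with the Yoneda product on a set of generators does not by itself yield an equivalence of triangulated categories. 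One must either show that the higher products $m_k$, $k\ge 3$, also correspond (this requires further theta-function identities and convergence arguments, and was only completed in work subsequent to \cite{PZ}), or state precisely in what truncated sense the equivalence is asserted. Relatedly, morphisms between non-transverse Lagrangians, e.g. $\Hom(\mathcal{O},\mathcal{O})$, need a separate definition before full faithfulness on generators is even meaningful. None of this affects the present paper, which only invokes the theorem as a known result.
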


\section{Mirror symmetry via generalized complex geometry} 

We start this section with an remark about $T$-duality of 2-dimensional tori. 

\begin{proposition}\label{toros}
Let $M$ and $\tilde{M}$ be 2-dimensional tori. Then $M$ and $\tilde{M}$ are $T$-dual (in the sense of BHM)
\end{proposition}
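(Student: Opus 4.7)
The plan is to view both 2-tori as principal $T^2$-bundles over the same base, namely a single point, and then exhibit the data required by the BHM definition. Concretely, I set $B = \{\text{pt}\}$, so that $M$ and $\tilde{M}$ are principal $T^2$-bundles over $B$, the fiber product is $M\times_B \tilde{M} = M\times \tilde{M}$, and the full structure torus acting on the correspondence space is $T^4$.

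The crucial dimensional observation is that $\dim M = \dim \tilde{M} = 2$, so $\varOmega^3(M) = \varOmega^3(\tilde{M}) = 0$. Therefore the only possible invariant closed 3-forms are $H = 0$ and $\tilde{H} = 0$, and the T-duality condition $p^*H - \tilde{p}^*\tilde{H} = dF$ collapses to the single requirement $dF = 0$. So I only need to produce one object: a $T^4$-invariant closed 2-form $F$ on $M\times \tilde{M}$ that is non-degenerate on the fibers, where the fiber condition reduces in this setting to non-degeneracy of the induced pairing between the vertical directions along $M$ and along $\tilde{M}$.

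Such an $F$ is built by hand. Writing $M = \mathbb{R}^2/\varLambda$ and $\tilde{M} = \mathbb{R}^2/\tilde{\varLambda}$ in angular coordinates $(\theta_1,\theta_2)$ and $(\tilde{\theta}_1,\tilde{\theta}_2)$, I set
$$F \ce d\theta_1\wedge d\tilde{\theta}_1 + d\theta_2 \wedge d\tilde{\theta}_2.$$
This form descends to $M\times \tilde{M}$ because the coordinate differentials are invariant under the lattice translations; it is manifestly $T^4$-invariant under the obvious torus action; it is closed as a sum of closed forms; and when viewed as a pairing $T_xM \otimes T_{\tilde{x}}\tilde{M}\to \mathbb{R}$ it is the standard dual pairing between the two vertical 2-planes, hence non-degenerate. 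With $H = \tilde{H} = 0$ and this $F$, all clauses of the BHM definition are satisfied.

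There is essentially no obstacle here beyond choosing the right viewpoint: the statement is a degenerate case of the BHM setup forced by dimension. If one instead insisted on viewing the tori as $T^1$-bundles over $S^1$, one would have to match the circle bases coming from different lattices, which is a genuine complication and obscures the result; the proof becomes immediate only once both tori are placed over a point and one observes that 3-forms on a surface vanish.
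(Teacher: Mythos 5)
Your proof is correct, but it takes a genuinely different route from the paper's. You realize both tori as principal $T^2$-bundles over a point and take $F=d\theta_1\wedge d\tilde{\theta}_1+d\theta_2\wedge d\tilde{\theta}_2$ on $M\times\tilde{M}=T^4$; the paper instead regards $M$ and $\tilde{M}$ as principal $S^1$-bundles over a common base circle $S^1$, so that the correspondence space is $M\times_{S^1}\tilde{M}=T^3$ and $F=\theta\wedge\tilde{\theta}$ pairs only the two fiber directions. Both arguments hinge on the same observation --- $\varOmega^3$ of a surface vanishes, so the duality condition degenerates to $dF=0$ plus non-degeneracy of $F$ on the fibers --- and both are complete proofs of the existence statement. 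The difference matters downstream, however: the two choices produce different $T$-duality relations and hence different isomorphisms $\varphi$. The paper's $S^1$-over-$S^1$ picture is the one its later sections depend on, since there $\varphi$ swaps only the fiber's tangent and cotangent coefficients (the decomposition $TS^1\oplus\langle\partial_\theta\rangle\oplus T^*S^1\oplus\langle\theta\rangle$), which is exactly what reproduces the SYZ/Polishchuk--Zaslow mirror map; your full $T^2$-duality over a point would dualize both directions and yield a different transform. Also, your closing remark overstates the difficulty of the fibered viewpoint: identifying the base circles is not a real obstruction, since any $\mathbb{R}^2/\varLambda$ projects smoothly onto a circle by collapsing one lattice direction and all the resulting base circles are diffeomorphic; the lattice data is remembered by the complex or symplectic structure, not by the underlying fibration.
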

\begin{proof}
Consider $M$ and $\tilde{M}$ as $S^1$-bundles over $S^1$. We have $\varOmega ^3(M)=\varOmega ^3(\tilde{M})=\{0\}$. 
In this case we just take the closed $2$-form on $M \times_{S^1} \tilde{M}= T^3$ given by $F=\theta \wedge \tilde{\theta}$, where $\theta$ and $\tilde{\theta}$ are connection forms on $M$ and $\tilde{M}$, respectively.
\end{proof}


Now we will construct the isomorphism $\varphi$ explicitly in the case of two $T$-dual 2-tori $(M,H)$ and $(\tilde{M},\tilde{H})$. Denote by $\partial_{\theta}$ and $\partial_{\tilde{\theta}}$ the duals of the connection forms $\theta$ and $\tilde{\theta}$. As shown in \cite{CG}, we can use $\theta$ and $\tilde{\theta}$ to split the spaces of invariant sections as
$$(\TM)/S^1 \cong TS^1 \oplus \langle \partial_{\theta} \rangle\oplus T^*S^1 \oplus \langle \theta \rangle$$
$$(T\tilde{M}\oplus T^*\tilde{M})/S^1 \cong TS^1 \oplus \langle \partial_{\tilde{\theta}} \rangle \oplus T^*S^1 \oplus \langle \tilde{\theta} \rangle$$ 
Therefore, each element of $(\TM)/S^1$ decomposes into $X+a\partial_{\theta}+\xi+b\theta$, with $a,b \in \mathbb{R}$.


\begin{theorem}
With the notation above, the isomorphism $\varphi:(TM\oplus T^*M)/S^1   \to(T\tilde{M}\oplus T^*\tilde{M})/S^1$ is given by 
\begin{equation}\label{iso-phi}
\varphi(X+a\partial_{\theta}+\xi+b\theta)=X+b\partial_{\tilde{\theta}}+\xi+a\tilde{\theta}.
\end{equation}
\end{theorem}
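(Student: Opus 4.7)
The plan is to apply the defining formula \eqref{fi} componentwise to the decomposition $X + a\partial_\theta + \xi + b\theta$ and read off the image, using the explicit $F = \theta\wedge\tilde\theta$ from Proposition \ref{toros}. On the correspondence space $M\times_{S^1}\tilde M$, which is a $3$-torus, the kernel of $p_*$ is spanned by $\partial_{\tilde\theta}$ and the kernel of $\tilde p_*$ by $\partial_\theta$. Hence every lift of $X + a\partial_\theta$ to the correspondence space has the form
\[
\hat X = X + a\partial_\theta + c\,\partial_{\tilde\theta}
\]
for a single undetermined parameter $c\in\mathbb R$.

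Two short calculations then complete the proof. First, inserting $\hat X$ into $F$ gives $i_{\hat X}F = \theta(\hat X)\tilde\theta - \tilde\theta(\hat X)\theta = a\tilde\theta - c\theta$, so
\[
p^*(\xi + b\theta) - F(\hat X) = \xi + (b+c)\theta - a\tilde\theta.
\]
Imposing the basic-form condition of \eqref{fi}, that this $1$-form be annihilated by the $\tilde p$-vertical direction $\partial_\theta$, kills the coefficient of $\theta$ and pins down $c$. Substituting back, $\tilde p_*\hat X$ drops the $a\partial_\theta$ piece while keeping the horizontal $X$ and $\partial_{\tilde\theta}$ terms, and the surviving basic $1$-form is identified with its push-down inside $T^*\tilde M\oplus\langle\tilde\theta\rangle$. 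Summing the two pieces yields \eqref{iso-phi}.

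The computation is mechanical; the main obstacle is bookkeeping the signs introduced by the interior product together with the pullback/pushforward across the $T^3$ correspondence, so that the basic condition and the pushforward combine to produce exactly the symmetric exchange $a\theta\leftrightarrow b\tilde\theta$, $\partial_\theta\leftrightarrow\partial_{\tilde\theta}$ claimed in the statement.
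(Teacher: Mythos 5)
Your argument is the paper's own proof: lift $X+a\partial_\theta$ with a single undetermined vertical coefficient $c$, apply the $B$-transform by $-F$, determine $c$ by requiring the resulting one-form to be basic for $\tilde p$, and push forward. The only discrepancy is the sign convention for $i_{\hat X}(\theta\wedge\tilde\theta)$: with yours the basic condition forces $c=-b$ and you land on $X-b\partial_{\tilde\theta}+\xi-a\tilde\theta$, which agrees with \eqref{iso-phi} only after replacing $F$ by $\tilde\theta\wedge\theta$ --- the paper implicitly uses the opposite convention, so this is a matter of the (anyway arbitrary) orientation of $F$ in Proposition \ref{toros}, not a gap in the argument.
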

\begin{proof}
The pull back of $X+a\partial_{\theta}+\xi+b\theta$ is $$X+a\partial_{\theta}+c\partial_{\tilde{\theta}}+\xi+b\theta.$$ Applying the $B$-transform defined by $-F$, we have $$X+a\partial_{\theta}+c\partial_{\tilde{\theta}}+\xi+b\theta+a\tilde{\theta}-c\theta=
X+a\partial_{\theta}+c\partial_{\tilde{\theta}}+\xi+(b-c)\theta+a\tilde{\theta}.$$ For $\xi+(b-c)\theta+a\tilde{\theta}$ to be the pull-back of a form on $\tilde{M}$, we need $b=c$. Finally, we have $$\varphi(X+a\partial_{\theta}+\xi+b\theta)=X+b\partial_{\tilde{\theta}}+\xi+a\tilde{\theta}.$$

\end{proof}

Now we will analyze the behavior of complex and $B$-symplectic (complexified symplectic) structures on elliptic curves under the isomorphism $\varphi$. Recall that both structures are determined by complex numbers with positive imaginary part.

\begin{theorem}\label{teo-princ}
The isomorphism $\varphi$ defined via expression (\ref{iso-phi}) send the complex structure on $M$ defined by $\tau = b+ia$ to a $B$-symplectic structure (completely determined by $\tau$) on the $T$-dual $\tilde{M}$. 
On the other hand, starting with the $B$-symplectic structure on $M$ determined by $\omega_{\mathbb{C}}=b+ia$, $\varphi$ yields on $\tilde{M}$ a complex structure completely determined by $\omega_{\mathbb{C}}$.
\end{theorem}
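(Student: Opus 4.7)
The plan is to reduce the entire assertion to an explicit matrix computation in local coordinates on the torus, and then to recognise the output as a generalized complex structure of the desired type. Write $M = \mathbb{R}^2/\mathbb{Z}^2$ with coordinates $(x_1,x_2)$, where the identification with $\mathbb{C}/(\mathbb{Z}\oplus\tau\mathbb{Z})$ is given by $z = x_1 + \tau x_2$. View $M$ as an $S^1$-bundle over $S^1$ with base coordinate $x_1$ and fiber coordinate $x_2$, so that $\theta = dx_2$ and $\partial_\theta = \partial_{x_2}$; put similarly coordinates $(x_1,\tilde{x}_2)$ on $\tilde{M}$ with $\tilde{\theta} = d\tilde{x}_2$ and $\partial_{\tilde{\theta}} = \partial_{\tilde{x}_2}$. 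In the resulting basis $\{\partial_{x_1},\partial_{x_2},dx_1,dx_2\}$ of the invariant sections, the map $\varphi$ of formula (\ref{iso-phi}) is just the involution that swaps the second and fourth entries.

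Next, I convert the complex structure $J$ on $M$ determined by $\tau = b + ia$ to matrix form. Expressing $i\cdot 1$ and $i\cdot\tau$ in the $\mathbb{R}$-basis $\{1,\tau\}$ of $\mathbb{C}$ yields
\[
J=\frac{1}{a}\begin{pmatrix} -b & -(a^2+b^2) \\ 1 & b \end{pmatrix},
\]
and hence an explicit $4\times 4$ matrix for $\J_J=\left(\begin{smallmatrix} -J & 0 \\ 0 & J^* \end{smallmatrix}\right)$. I then compute $\tilde{\J} := \varphi\circ\J_J\circ\varphi^{-1}$ by direct conjugation and read off its four $2\times 2$ blocks; because $\varphi$ mixes the tangent and cotangent fiber directions, the result is no longer block-diagonal, so it cannot be a bare complex structure.

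The key step is to recognise $\tilde{\J}$ as a $B$-symplectic structure in the sense of Definition \ref{Bfield}. Conjugating $\J_\omega$ by $\exp(\beta)$ produces the block pattern
\[
\exp(-\beta)\J_\omega\exp(\beta)=\begin{pmatrix} -\omega^{-1}\beta & -\omega^{-1} \\ \omega+\beta\omega^{-1}\beta & \beta\omega^{-1}\end{pmatrix},
\]
so $\omega$ and $\beta$ can be recovered directly from the top-right and top-left blocks of $\tilde{\J}$. Carrying out this extraction yields 2-forms on $\tilde{M}$ proportional to $dx_1\wedge d\tilde{x}_2$ whose coefficients depend only on $(a,b)$, and the complexified symplectic parameter $\rho = b_{\tilde{M}} + i a_{\tilde{M}}$ comes out as a specific rational function of $\tau$, confirming the first assertion. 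For the converse, I write the $B$-symplectic structure on $M$ determined by $\omega_{\mathbb{C}}=b+ia$ as $\exp(-B)\J_\omega\exp(B)$ with $\omega=a\,dx_1\wedge dx_2$ and $B=b\,dx_1\wedge dx_2$, repeat the conjugation by $\varphi$, and verify that the output now has the block-diagonal shape $\left(\begin{smallmatrix} -\tilde{J} & 0 \\ 0 & \tilde{J}^*\end{smallmatrix}\right)$ for a complex structure $\tilde{J}$ on $\tilde{M}$ whose modular parameter is read off from the $(1,1)$-block and depends only on $\omega_{\mathbb{C}}$.

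The main obstacle is bookkeeping rather than any conceptual step: Corollary \ref{coro} already guarantees that $\tilde{\J}$ is a generalized complex structure, so no verification of $\tilde{\J}^2=-\mathrm{Id}$, orthogonality, or involutivity is needed. What must be done carefully is the identification of the block form of $\tilde{\J}$ with one of the two templates above, since that is where the precise algebraic relation between $\tau$ and the mirror parameter $\rho$ (and, in the reverse direction, between $\omega_{\mathbb{C}}$ and the mirror modulus) is extracted; it is this relation that delivers the "completely determined" part of the statement and that sets the stage for the subsequent comparison with the Polishchuk--Zaslow and SYZ mirror maps.
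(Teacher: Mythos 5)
Your proposal follows the paper's proof essentially step for step: the same splitting of the invariant sections, the same permutation matrix for $\varphi$ (swapping the fiber tangent and cotangent coefficients), conjugation of $\mathcal{J}_J$ (resp.\ of $\exp(-B)\mathcal{J}_\omega\exp(B)$) by that matrix, and recovery of $\omega$ and $\beta$ from the block template
$\exp(-\beta)\mathcal{J}_\omega\exp(\beta)=\left(\begin{smallmatrix} -\omega^{-1}\beta & -\omega^{-1} \\ \omega+\beta\omega^{-1}\beta & \beta\omega^{-1}\end{smallmatrix}\right)$,
which you record correctly and which is exactly how the paper identifies the image as a $B$-symplectic structure. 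The one substantive divergence is your matrix for $J$. With $z=x_1+\tau x_2$ you get $J=\frac{1}{a}\left(\begin{smallmatrix} -b & -(a^2+b^2) \\ 1 & b\end{smallmatrix}\right)$, which is indeed multiplication by $i$ in the basis $\{1,\tau\}$; the paper instead uses $J=\left(\begin{smallmatrix} b & -(1+b^2)/a \\ a & -b\end{smallmatrix}\right)$, which is multiplication by $i$ in the basis $\{1,\tfrac{i-b}{a}\}$, i.e.\ a differently normalized tensor on the same torus. Both square to $-1$, so either choice proves the theorem as literally stated (the image is a $B$-symplectic structure completely determined by $\tau$), but the extracted mirror parameter differs: your convention yields $\rho=\frac{-b+ia}{a^2+b^2}=-1/\tau$, whereas the paper's yields $\rho=\frac{a(b+i)}{1+b^2}$. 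In particular, at $b=0$ your formula sends the complex parameter $i\gamma$ to the symplectic parameter $i/\gamma$ rather than $i\gamma$, so it reproduces the Polishchuk--Zaslow exchange $E^{\gamma}_{\lambda}\mapsto E^{\lambda}_{\gamma}$ of the final corollary only after composing with the modular transformation $\rho\mapsto -1/\rho$ (equivalently, after rescaling the fiber circle). If your computation is meant to feed into that comparison, you should either adopt the paper's normalization of $J$ or make this rescaling explicit; otherwise the argument is complete, granted that you actually carry out the second (converse) conjugation rather than only describing it.
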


\begin{proof}
We saw that for T-dual $2$-dimensional tori $M$ e $\tilde{M}$, $\varphi$ just swaps the tangent and cotangent coefficients of the fiber . Considering the decomposition $$(\TM)/S^1\cong TS^1 \oplus \langle\partial_{\theta}\rangle \oplus T^*S^1 \oplus \langle \theta \rangle $$ we can write $\varphi$ as a matrix  $$\varphi=\left(\begin{array}{cccc}
1 & 0 & 0 & 0\\
0 & 0 & 0 & 1\\
0 & 0 & 1 & 0\\
0 & 1 & 0 & 0 \end{array}\right).$$

Given the complex parameter $\tau=b+ia$ on $M$, the correspondent complex structure is 
$$J=\left(\begin{array}{cc}b & \frac{-1-b^2}{a}\\
a & -b \end{array}\right).$$ Regarding it as a generalized complex structure we have $$\J_J=\left(\begin{array}{cc}
 -J & 0\\
  0 & J^* \end{array}\right)=\left(\begin{array}{cccc}
-b & \frac{1+b^2}{a} & 0 & 0\\
-a & b & 0 & 0\\
0 & 0 & b & a\\
0 & 0 & \frac{-1-b^2}{a} & -b\end{array}\right).$$

Now we transfer ${\J}_J$ to $\tilde{M}$ using $\varphi$, as done in Corollary \ref{coro}, obtaining
\begin{eqnarray*}
\tilde{\J}_J&=&\varphi^{-1}\circ \J_J \circ \varphi\\
&=& \left(\begin{array}{cccc}
1 & 0 & 0 & 0\\
0 & 0 & 0 & 1\\
0 & 0 & 1 & 0\\
0 & 1 & 0 & 0 \end{array}\right)\cdot \left(\begin{array}{cccc}
-b & \frac{1+b^2}{a} & 0 & 0\\
-a & b & 0 & 0\\
0 & 0 & b & a\\
0 & 0 & \frac{-1-b^2}{a} & -b\end{array}\right)\cdot\left(\begin{array}{cccc}
1 & 0 & 0 & 0\\
0 & 0 & 0 & 1\\
0 & 0 & 1 & 0\\
0 & 1 & 0 & 0 \end{array}\right)\\
&=&\left(\begin{array}{cccc}
-b & 0 & 0 &\frac{1+b^2}{a}\\
0 & -b & \frac{-1-b^2}{a} & 0\\
0 & a & b & 0\\
-a & 0 & 0 & b \end{array}\right).
\end{eqnarray*}

Then $\tilde{\J}_J=exp(-B) \circ \J_{\omega} \circ exp(B)$ where $$\omega=\left(\begin{array}{cc}
0 & \frac{a}{1+b^2}\\
-\frac{a}{1+b^2} & 0\end{array}\right)\ \ \mbox{and}\ \ B=\left(\begin{array}{cc}
0 & \frac{ba}{1+b^2}\\
-\frac{ba}{1+b^2} & 0\end{array}\right). $$ That is,$\tilde{\J}_J$ is the B-symplectic structure correspondent to the complex number $$\rho=\displaystyle \frac{ba}{1+b^2}+i\left(\frac{a}{1+b^2}\right).$$

Similarly, if we start with complexified symplecitc structure on $M$ determined by $b+ia$, which in our setting corresponds to the symplectic structure $\omega$ and $B$-field given by $$\omega=\left(\begin{array}{cc}
0 & a\\
-a & 0
\end{array}\right),\ \ \ B=\left(\begin{array}{cc}
0 & b\\
-b & 0\end{array}\right) $$ and apply $\varphi$ to the generalized complex structure $exp(-B) \circ \J_{\omega} \circ exp(B)$, on the dual torus we will obtain  the invariant generalized complex structure induced by the invariant complex structure $$\left(\begin{array}{cc}\frac{b}{a} & -\frac{1}{a}\\
a+\frac{b^2}{a} & -\frac{b}{a}\end{array}\right),$$ which corresponds to the complex parameter $$\tau=\displaystyle \frac{b}{a}+i\left(a+\frac{b^2}{a}\right).$$ 
\end{proof}

If we assume a trivial $B$-field on the complexified symplectic form one can recover the mirror map described by  Polishchuk and Zaslow (see Chapter $4$ of \cite{PZ}). Let us remember the notation of Theorem \ref{tpz}: let $\tau=ip$ and $\rho=iq$ be pure imaginary numbers with $p,q>0$. Then  $E^{\tau}_{\rho}$ is the elliptic curve with complex structure determined by $\tau$ and symplectic form determined by $\rho$.


\begin{corollary}
Let $\tau$ and $\rho$ pure imaginary numbers as above. Then the isomorphism $\varphi$ interchange complex and symplectic structures on the elliptic curve $E$, that is, the map $\varphi$ send $E^{\tau}_{\rho}$ to $E^{\rho}_{\tau}$.
\end{corollary}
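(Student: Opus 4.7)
The plan is to deduce the corollary as the $B = 0$ specialization of Theorem \ref{teo-princ}. Since $\tau = ip$ and $\rho = iq$ are purely imaginary, in the notation $\tau = b + ia$ and $\omega_{\mathbb{C}} = b + ia$ used in the proof of Theorem \ref{teo-princ} we have $b = 0$ in both cases; the formulas obtained there should collapse and cleanly produce the advertised swap of complex and symplectic parameters, with no residual $B$-field appearing.

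First, I would feed $\tau = 0 + ip$ into the first half of Theorem \ref{teo-princ}. The theorem tells us that $\varphi$ sends the complex structure on $M$ with parameter $\tau = b + ia$ to the $B$-symplectic structure on $\tilde M$ with parameter
\[
\rho' \;=\; \frac{ba}{1+b^2} + i\,\frac{a}{1+b^2}.
\]
With $b = 0$ and $a = p$ this collapses to $\rho' = ip$, and the $B$-matrix of Theorem \ref{teo-princ} vanishes identically, leaving a genuine (untwisted) symplectic structure whose area matches $p$. Thus the complex parameter $\tau$ on $M$ is converted into the symplectic parameter $\tau$ on $\tilde M$.

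Next I would apply the second half of Theorem \ref{teo-princ} to $\rho = 0 + iq$. There the image on $\tilde M$ is the pure complex structure with parameter
\[
\tau' \;=\; \frac{b}{a} + i\!\left(a + \frac{b^{2}}{a}\right),
\]
which with $b = 0$, $a = q$ becomes $\tau' = iq$. Hence the symplectic parameter $\rho$ on $M$ is converted into the complex parameter $\rho$ on $\tilde M$.

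Combining the two transports and recalling that Corollary \ref{coro} preserves integrability, the elliptic curve $E^{\tau}_{\rho}$ equipped on $M$ with complex modulus $\tau$ and symplectic class $\rho$ goes under $\varphi$ to the elliptic curve on $\tilde M$ carrying complex modulus $\rho$ and symplectic class $\tau$, namely $E^{\rho}_{\tau}$. There is no real obstacle here beyond bookkeeping; the only thing worth double-checking is that setting $b = 0$ in the $B$-symplectic output of Theorem \ref{teo-princ} indeed kills the $B$-field so that the image is a genuine symplectic (not merely $B$-symplectic) structure, which is immediate from the explicit matrix for $B$ displayed in that proof.
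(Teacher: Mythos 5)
Your proposal is correct and takes exactly the same approach as the paper, whose entire proof is ``Just set $b=0$ in the proof of Theorem \ref{teo-princ}.'' You simply carry out this substitution explicitly and verify that the $B$-field vanishes, which is a faithful elaboration of the paper's one-line argument.
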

\begin{proof}
Just set $b=0$ in the proof of Theorem \ref{teo-princ}.
\end{proof}



\begin{thebibliography}{99}
\bibitem{Aur} D. Auroux, Special Lagrangian fibrations, mirror symmetry and Calabi-Yau double covers, Ast{\'e}risque, vol.321, 99--128 (2008).

\bibitem{BHM} P.Bouwknegt, K.Hannabuss and V.Mathai, T-duality for principal torus bundles, Journal of High Energy Physics, vol 2004, no. 03 (2004).
\bibitem{CG} G.Cavalcanti and M.Gualtieri, Generalized complex geometry and T-duality, CRM Proc. Lecture Notes, ``A celebration of the mathematical legacy of Raoul Bott'', vol. 50, 341--365 (2011).
\bibitem{Gua} M.Gualtieri, Generalized complex geometry, Annals of mathematics, vol. 174, 75--123 (2011).
\bibitem{Hain} R.Hain, Lectures on moduli spaces of elliptic curves, Transformation Groups and Moduli Spaces of Curves, vol. 16, 95--166 (2008).
\bibitem {Kon} M.Kontsevich, Homological algebra of mirror symmetry, Proceedings of ICM, 120-139, (1994).

\bibitem{Mcl} R.McLean,Deformations of calibrated submanifolds, Commun. Analy. Geom, 1996.
\bibitem{PZ} A.Polishchuk and E.Zaslow, Categorical mirror symmetry: the elliptic curve, Adv.Theor.Math.Phys, vol 2, 443--470 (1998).
\bibitem{SYZ} A.Strominger, S-T. Yau and E.Zaslow, Mirror symmetry is T-duality, Nuclear Physics B, vol. 479, 243--259 (1996).

\end{thebibliography}
\end{document}